\newtheorem{theorem}{Theorem}[section]
\newtheorem{lemma}[theorem]{Lemma}
\newtheorem{proposition}[theorem]{Proposition}
\newcommand{\beq}{\begin{equation}}
\newcommand{\eeq}{\end{equation}}
\newcommand{\beqq}{\begin{equation*}}
\newcommand{\eeqq}{\end{equation*}}
\theoremstyle{definition}
\newtheorem{definition}[theorem]{Definition}
\theoremstyle{remark}
\newtheorem{remark}[theorem]{Remark}
\numberwithin{equation}{section}
\begin{document}

\title[Well-posedness for NLS on waveguide]{Well-posedness for energy-critical nonlinear Schr\"odinger equation on waveguide manifold}
\author{Xing Cheng, Zehua Zhao, and Jiqiang Zheng}
\maketitle

\begin{abstract}
In this article, we utilize the scale-invariant Strichartz estimate on waveguide which is developed recently by Barron \cite{Barron} based on Bourgain-Demeter $l^2$ decoupling method \cite{BD} to give a unified and simpler treatment of well-posedness results for energy critical nonlinear Schr\"odinger equation on waveguide when the whole dimension is three and four.
\end{abstract}
\bigskip

\noindent \textbf{Keywords}: Nonlinear Schr\"odinger equation, Waveguide manifold, Well-posedness, Decoupling method, Strichartz estimate, Bilinear estimate
\bigskip

\noindent \textbf{Mathematics Subject Classification (2010)} Primary: 35Q55; Secondary: 35R01, 58J50, 47A40.

\section{introduction}

In this article, we study the well-posedness theory of the following nonlinear Schr\"odinger equation (NLS) on product space $\mathbb{R}^{m} \times \mathbb{T}^{n}$, where $m,n \in \mathbb{N}$,
\begin{equation}\label{maineq1}
\aligned
\begin{cases}
\left(i\partial_t+ \Delta_{\mathbb{R}^{m} \times \mathbb{T}^{n}}\right) u = F(u) = \pm |u|^{p} u, \\
u(0,x) = u_{0} \in H^{1}(\mathbb{R}^{m} \times \mathbb{T}^{n}).
\end{cases}
\endaligned
\end{equation}
Here the product space $\mathbb{R}^{m} \times \mathbb{T}^{n}$ is known as `semiperiodic space' as well as `waveguide manifold', where $\mathbb{T}^{n}$ is
a (rational or irrational) $n$-dimensional torus. Similar as the Euclidean case, we call \eqref{maineq1} `energy-critical' when the whole dimension $d=m+n$ and the exponent $p$ satisfies $p=\frac{4}{d-2}$. In this article, we consider equation \eqref{maineq1} with $3$ and $4$ whole dimensions. More precisely, we consider
\begin{equation}\label{maineq2}
\aligned
\begin{cases}
\left(i\partial_t+ \Delta_{\mathbb{R}^{m} \times \mathbb{T}^{n}}\right) u =  \pm |u|^{\frac{4}{m+n-2}} u, \\
u(0,x) = u_{0} \in H^{1}(\mathbb{R}^{m} \times \mathbb{T}^{n}),
\end{cases}
\endaligned
\end{equation}
where whole dimension $d=m+n=\{3,4\}$. There are exactly $9$ specific NLS models included. When $n=0$, the problems coincide NLS on pure Euclidean space; when $m=0$, the problems coincide NLS on pure tori.\vspace{3mm} 

Initial value problem \eqref{maineq2} is called defocusing if the sign of the nonlinearity is positive; focusing if the sign is negative. There are three important conserved quantities of \eqref{maineq2} as follows.
\begin{align*}
\text{mass: }    &\quad
{M}(u(t))  = \int_{\mathbb{R}^{m} \times \mathbb{T}^{n}} |u(t,x,y)|^2\,\mathrm{d}x\mathrm{d}y,\\
\text{   energy:  }     &  \quad
{E}(u(t))  = \int_{\mathbb{R}^{m} \times \mathbb{T}^{n}} \frac12 |\nabla u(t,x,y)|^2  \pm \frac{d-2}{2d} |u(t,x,y)|^{\frac{2d}{d-2}} \,\mathrm{d}x\mathrm{d}y,\\
\text{ momentum: } &  \quad
{P}(u(t)) = \Im \int_{\mathbb{R}^{m} \times \mathbb{T}^{n}} \overline{u(t,x,y)} \nabla u(t,x,y)\,\mathrm{d}x\mathrm{d}y.
\end{align*}
Well-posedness theory and long time behavior of NLS is a hot topic in the area of dispersive evolution equations and has been studied widely in recent decades. The Euclidean case is first treated and the theory at least in the defocusing setting is well established. We refer to \cite{CW,Iteam1,BD3,BDec4,KM1,KVnotes,Taobook} for some important Euclidean results. Moreover, we refer to \cite{HTT1,IPT3,KV1,Haitian} for the tori case. We may roughly think that the waveguide case is `between' the Euclidean case and the tori case in some sense since the waveguide is the product of the Euclidean space and the tori. Both of the techniques for the two cases are often combined and applied to the waveguide case. At last, we refer to \cite{IPS,PTW} regarding some results for NLS on other spaces such as hyperbolic space and sphere and refer to \cite{Cbook,BDbook,KVnotes,Taobook} for some classical textbooks or notes on dispersive PDE. \vspace{3mm}

The main result of this article is as follows.
\begin{theorem}[Local well-posedness] \label{main}
For $d=m+n=3,4$, we consider the initial value problem \eqref{maineq2}. Then there exists a time $T=T(u_0)$ and a unique solution $u \in C_t^0 ([0,T);H^1(\mathbb{R}^{m} \times \mathbb{T}^{n}) ) \cap X^1([0,T))$. Moreover, there exists $\eta_0=\eta_0(d)>0$ such that if $||u_0||_{H^1(\mathbb{R}^{m} \times \mathbb{T}^{n})}<\eta_0$, then the solution is global in time. 
\end{theorem}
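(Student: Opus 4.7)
The plan is to solve the Duhamel formulation
\[ u(t) = e^{it\Delta} u_0 \mp i \int_0^t e^{i(t-s)\Delta} F(u(s))\, ds \]
by a contraction mapping argument in the function space $X^1([0,T))$, which is the atomic $U^2$-$V^2$-type resolution space adapted to the Schr\"odinger flow on $\mathbb{R}^m\times\mathbb{T}^n$, together with its dual-type space $N^1([0,T))$. The linear bound $\|e^{it\Delta}u_0\|_{X^1}\lesssim \|u_0\|_{H^1}$ and the embedding $X^1\hookrightarrow C^0_tH^1$ are standard once the spaces are set up, so the analytic heart of the theorem is the nonlinear estimate
\[ \Big\|\int_0^t e^{i(t-s)\Delta}F(u(s))\,ds\Big\|_{X^1([0,T))}\lesssim \|u\|_{X^1([0,T))}^{1+\frac{4}{d-2}}, \]
which, because the nonlinearity is quintic for $d=3$ and cubic for $d=4$, amounts to a five-linear or a tri-linear estimate on frequency-localized pieces.

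To establish the multilinear estimate I would Littlewood-Paley decompose each factor, reduce matters to controlling the $N^0$-norm of the product of frequency-localized factors by the corresponding $X^0$-norms, and then transfer to Strichartz norms. The scale-invariant Strichartz estimate of Barron (based on the Bourgain-Demeter $\ell^2$-decoupling) yields an $L^{q}_{t,x,y}$ bound at the critical exponent $q=2(d+2)/d$ without logarithmic loss in the dyadic parameter; combined with Bernstein's inequality on $\mathbb{T}^n$ and a bilinear refinement giving a power gain $(N_{\min}/N_{\max})^{\delta}$ when the two highest frequencies are well-separated, this lets the dyadic pieces be summed in $\ell^2$ at the $H^1$ level. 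The key feature is that the same scale-invariant Strichartz works uniformly in every splitting $d=m+n$ and for both $d=3,4$, which is precisely what permits the unified treatment advertised in the abstract.

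With the multilinear estimate in place, the rest is routine. For local existence one chooses $T$ small enough that $\|e^{it\Delta}u_0\|_{X^1([0,T))}$ is small, then shows that
\[ \Phi(u)(t):=e^{it\Delta}u_0\mp i\int_0^t e^{i(t-s)\Delta}F(u(s))\,ds \]
maps a small ball in $X^1([0,T))$ into itself and is a contraction; uniqueness and persistence of $H^1$-regularity then follow. For small data one simply takes $T=\infty$: the bound $\|e^{it\Delta}u_0\|_{X^1(\mathbb{R})}\lesssim \|u_0\|_{H^1}<\eta_0$ combined with the strictly superlinear dependence of the nonlinear term on $\|u\|_{X^1}$ closes the bootstrap. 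The main obstacle I anticipate is the multilinear estimate of the second paragraph, in particular handling the mixed Euclidean/periodic interactions in the bilinear refinement and keeping all gains uniform across the different partitions $m+n=d$; once this is done, the contraction scheme and the small-data globalization are essentially mechanical in this framework.
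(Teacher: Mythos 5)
Your small-data globalization and your outline of the multilinear estimate (Littlewood--Paley decomposition, dual pairing via $Y^{-1}$, cube decomposition feeding the scale-invariant Strichartz bound, $\ell^2$ summation at the $H^1$ level) match the paper's Lemma 4.1 and Lemma 4.2 in spirit, though note that the paper contracts on $X^1([0,1])$ and iterates via the conserved mass and energy rather than working on $X^1(\mathbb{R})$ directly, since its multilinear estimate is stated only for $0<T<1$.

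The large-data local well-posedness step, however, has a genuine gap. You propose to pick $T$ small enough that $\|e^{it\Delta}u_0\|_{X^1([0,T))}$ is small. This is false: since $e^{-it\Delta}\bigl(e^{it\Delta}u_0\bigr)=u_0$ is constant in $t$, it is a single $U^2$ atom on any interval, so $\|e^{it\Delta}u_0\|_{X^1([0,T))}\approx\|u_0\|_{H^1}$ for \emph{every} $T>0$. The linear flow has no time-smallness in these atomic/variation spaces, in contrast to an $L^p_tL^q_x$ Strichartz norm with finite $p$, so the small ball cannot close as you describe. This is precisely what the paper's frequency-truncation argument is for: choose $N=N(u_0)$ with $\|P_{>N}u_0\|_{H^1}\le\delta$, run the contraction on the two-parameter ball $\{u:\|u\|_{X^1}\le 2E,\ \|P_{>N}u\|_{X^1}\le 2\delta\}$, and split the nonlinearity as $F=F_1+F_2$ with $F_1=O\bigl(u_{>N}^2\,u^{(6-d)/(d-2)}\bigr)$ and $F_2=O\bigl(u_{\le N}^{4/(d-2)}\,u\bigr)$. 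The $F_1$ piece is small through the multilinear $X^1$ estimate together with the a priori $2\delta$ bound on the two high-frequency factors; the $F_2$ piece is estimated in $L^1_tH^1_x$ (which maps into $X^1$ by \eqref{property2}), where Bernstein on $P_{\le N}$ produces a factor $TN^2$ made small by shrinking $T$. Without this two-norm ball and nonlinear splitting the contraction does not close. One further caution: the bilinear refinement you invoke should not be the Euclidean one carrying an extra negative power of $N_{\max}$ from frequency separation --- the paper remarks that this fails on domains containing a torus --- but rather the weaker bound $\|u_{N_1}v_{N_2}\|_{L^2}\lesssim N_2^{(d-2)/2}\|u_{N_1}\|_{Y^0}\|v_{N_2}\|_{Y^0}$, obtained by tiling frequency space into cubes of side $N_2$ and applying the $L^4\times L^4$ Strichartz bound on each tile; that weaker estimate is what suffices here.
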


\begin{remark}
The solution space $X^1$ is first introduced in Herr-Tataru-Tzvetkov \cite{HTT1,HTT2} which is based on atomic space and variation space. See Section 2 for explicit discussions.
\end{remark}

\begin{remark}\label{1.3}
This result is indifferent to the rational/irrational choice for the tori direction since the tools (such as Function spaces and Strichartz estimate) we use work for both cases. It is shown that irrational tori enjoy better Strichartz estimates on long time intervals (See Deng-Germain-Guth \cite{DGG} for more details).
\end{remark}

\begin{remark}
We expect to treat the high dimensional analogue of Theorem \ref{main} based on delicate treatment of the nonlinearity. One main concern would be that such function spaces are not well compatible with non-integer nonlinearity. In this paper, we discuss the cubic and the quintic nonlinearity as examples, which corresponds to $4d$ case and $3d$ case respectively.
\end{remark}

\begin{remark}
We point out that for this problem, we handle both of the defocusing case and the focusing case together. However, generally speaking, when one considers long time dynamics of NLS with large data, the focusing case and the defocusing case are quite different. The dynamics of NLS are much richer in the focusing setting. We refer to \cite{BDec4,KM1,KVfocec} regarding some focusing NLS results for the Euclidean case. 
\end{remark}

We make some more comments on previous related results. When $n=0$, the NLS problems are in the setting of pure Euclidean space, which are different from the tori case ($m=0$) or waveguide case. Moreover, defocusing energy critical NLS are well studied in Colliander-Keel-Staffilani-Takakao-Tao \cite{Iteam1} (3d case) and Ryckman-Visan \cite{RV4} (4d case),  so we put them aside. (See also Killip-Visan \cite{3new} and Visan \cite{new4} for new proofs using long time Stricharz estimate technique established in Dodson \cite{BD3}.) Along this paper, we consider the case when $n \geq 1$. Also, we point out that Theorem 1.3 in Killip-Visan \cite{KV1} is the analogue of Theorem \ref{main} for the tori case. For some specific models, global well-posedness and even scattering behavior are expected. Please see \cite{HTT1,HTT2,IPT3,IPRT3,KV1,Haitian,Z1,Z2} for more information. The main purpose of this paper is to give a unified and simpler treatment of well-posedness results based on the Strichartz estimate established in Barron \cite{Barron}.\vspace{3mm}

The proof of Theorem \ref{main} follows from a standard fixed point argument based on a bilinear estimate, which will be explained explicitly in Section 5. The organization of the rest of this paper is: in Section 2, we discuss the preliminaries including notations, Littlewood-Pelay theory and function spaces; in Section 3, we give an overview of the Strichartz estimate on waveguide; in Section 4, we prove some important estimates; in Section 5, we give the proof of the main theorem. 

\section{Preliminaries}
In this section, we discuss notations and function spaces which are initially established by S. Herr, D.Tataru, and N. Tzvetkov \cite{HTT1,HTT2} and have been especially widely applied for NLS problems on tori or waveguides. See \cite{IPT3,IPRT3,KV1,Haitian,Z1,Z2} for examples. Also see Dodson \cite{BD2} for a result which applies similar spaces in the Euclidean setting.\vspace{3mm}

Another thing we want to emphasize here is, for convenience, the following setting in Subsection 2.1 is for the usual tori, i.e, $\mathbb{T}^n=\mathbb{R}^n/\mathbb{Z}^n$. We refer to Killip-Visan \cite{KV1} and Fan-Staffilani-Wang-Wilson \cite{Fan} for the general setting including irrational tori. As we claimed in Remark \ref{1.3}, our result is indifferent to the rational/irrational choice for the tori direction. 
\subsection{Notations}
we write $A \lesssim B$ to say that there is a constant $C$ such that $A\leq CB$. We use $A \simeq B$ when $A \lesssim B \lesssim A $. Particularly, we write $A \lesssim_u B$ to express that $A\leq C(u)B$ for some constant $C(u)$ depending on $u$.\vspace{3mm}

Throughout this paper, we regularly refer to the spacetime norms 
\begin{equation}
    ||u||_{L^p_tL^q_z(I_t \times \mathbb{R}^m\times \mathbb{T}^n)}=\left(\int_{I_t}\left(\int_{\mathbb{R}^m\times \mathbb{T}^n} |u(z)|^q dz \right)^{\frac{p}{q}} dt\right)^{\frac{1}{p}}.
\end{equation}

Now we turn to the Fourier transformation and Littlewood-Paley theory. We define the Fourier transform on $\mathbb{R}^m \times \mathbb{T}^n$ as follows:
\begin{equation}
    (\mathcal{F} f)(\xi)= \int_{\mathbb{R}^m \times \mathbb{T}^n}f(z)e^{-iz\cdot \xi}dz,
\end{equation}
where $\xi=(\xi_1,\xi_2,...,\xi_{d})\in \mathbb{R}^m \times \mathbb{Z}^n$. We also note the Fourier inversion formula
\begin{equation}
    f(z)=c \sum_{(\xi_{m+1},...,\xi_{d})\in \mathbb{Z}^n} \int_{(\xi_1,...,\xi_{m}) \in \mathbb{R}^m} (\mathcal{F} f)(\xi)e^{iz\cdot \xi}d\xi_1...d\xi_m.
\end{equation}
We define the Schr{\"o}dinger propagator $e^{it\Delta}$ by 
\begin{equation}
    \left(\mathcal{F} e^{it\Delta}f\right)(\xi)=e^{-it|\xi|^2}(\mathcal{F} f)(\xi).
\end{equation}
We are now ready to define the Littlewood-Paley projections. First, we fix $\eta_1: \mathbb{R} \rightarrow [0,1]$, a smooth even function satisfying
\begin{equation}
    \eta_1(\xi) =
\begin{cases}
1, \ |\xi|\le 1,\\
0, \ |\xi|\ge 2,
\end{cases}
\end{equation}
and $N=2^j$ a dyadic integer. Let $\eta^d=\mathbb{R}^d\rightarrow [0,1]$, $\eta^d(\xi)=\eta_1(\xi_1)\eta_1(\xi_2)\eta_1(\xi_3)...\eta_1(\xi_d)$. We define the Littlewood-Paley projectors $P_{\leq N}$ and $P_{ N}$ by 
\begin{equation}
    \mathcal{F} (P_{\leq N} f)(\xi):=\eta^d\left(\frac{\xi}{N}\right) \mathcal{F} (f)(\xi), \quad \xi \in \mathbb{R}^m \times \mathbb{Z}^n,
\end{equation}
and
\begin{equation}
P_Nf=P_{\leq N}f-P_{\leq \frac{N}{2}}f.    
\end{equation}
For any $a\in (0,\infty)$, we define
\begin{equation}
    P_{\leq a}:=\sum_{N\leq a}P_N,\quad P_{> a}:=\sum_{N>a}P_N.
\end{equation}

\subsection{Function spaces} In this subsection, we describe the function spaces used in this paper. For $C=[-\frac{1}{2},\frac{1}{2})^d \in \mathbb{R}^d$ and $z\in \mathbb{R}^d$ ($d=3,4$), we denote by $C_z=z+C$ the translate by $z$ and define the sharp projection operator $P_{C_z}$ as follows: ($\mathcal{F}$ is the Fourier transform): 
\[
\mathcal{F}(P_{C_z} f)=\chi_{C_z}(\xi) \mathcal{F} (f)  (\xi).
\]
Here $\chi_{C_z}$ is the characteristic function restrained on $C_z$. We use the same modifications of the atomic and variation space norms. Namely, for $s\in \mathbb{R}$, we define:
\[ \|u\|_{X^s(\mathbb{R})}^2=\sum_{z\in \mathbb{Z}^d} \langle z \rangle^{2s} \|P_{C_z} u\|_{U_{\Delta}^2(\mathbb{R};L^2)}^2
\]
\noindent and similarly we have,  
\[\|u\|_{Y^s(\mathbb{R})}^2=\sum_{z\in \mathbb{Z}^d} \langle z \rangle^{2s} \|P_{C_z} u\|_{V_{\Delta}^2(\mathbb{R};L^2)}^2\]          
\noindent where the $U_{\Delta}^p$ and $V_{\Delta}^p$ are the atomic and variation spaces respectively of functions on $\mathbb{R}$ taking values in $L^2(\mathbb{R}^m\times \mathbb{T}^n)$. There are some nice properties of those spaces. 

\begin{definition}Let $1\leq p < \infty$, and $H$ be a complex Hilbert space. A $U^p$-atom is a piecewise defined function, $a:\mathbb{R} \rightarrow H$，
\[ a=\sum_{k=1}^{K}\chi_{[t_{k-1},t_k)}\phi_{k-1},
\]
where $\{t_k\}_{k=0}^{K} \in \mathcal{Z}$ and $\{\phi_k\}_{k=0}^{K-1} \subset H$ with $\sum\limits_{k=0}^{K}||\phi_k||^p_H=1$. Here we let $\mathcal{Z}$ be the set of finite partitions $-\infty<t_0<t_1<...<t_K\leq \infty$ of the real line.\vspace{3mm}

The atomic space $U^p(\mathbb{R};H)$ consists of all functions $u:\mathbb{R}\rightarrow H$ such that
%
 $u=\sum\limits_{j=1}^{\infty}\lambda_j a_j$ for $U^p$-atoms $a_j$, $\{\lambda_j\} \in l^1$, with norm 
 \[||u||_{U^p}:=\inf\left\{\sum^{\infty}_{j=1}|\lambda_j|:u=\sum_{j=1}^{\infty}\lambda_j a_j,\lambda_j\in \mathbb{C}, a_j \text{ is }  U^p\textmd{-atom}\right\}.\]
\end{definition}
\begin{definition}Let $1\leq p < \infty$, and $H$ be a complex Hilbert space. We define $V^p(\mathbb{R},H)$ as the space of all functions $v:\mathbb{R} \rightarrow H$ such that
\[ ||u||_{V^p}:=\sup\limits_{\{t_k\}^K_{k=0} \in \mathcal{Z}}\left(\sum_{k=1}^{K}||v(t_k)-v(t_{k-1})||^p_{H}\right)^{\frac{1}{p}} \leq +\infty,
\]
\noindent where we use the convention $v(\infty)=0$.
\noindent Also, we denote the closed subspace of all right-continuous functions $v:\mathbb{R}\rightarrow H$ such that $\lim\limits_{t\rightarrow -\infty}v(t)=0$ by $V^p_{rc}(\mathbb{R},H)$. 
\end{definition}
For the purpose of this problem, we choose the Hilbert space $H$ to be $L^2$-based Sobolev space $H^s(\mathbb{R}^m\times \mathbb{T}^n)$. 
\begin{definition}For $s\in \mathbb{R}$, we let $U^p_{\Delta}H^s$ resp. $V^p_{\Delta}H^s$ be the spaces of all functions such that $e^{-it\Delta}u(t)$ is in $U^p(\mathbb{R},H^s)$ resp. $V^p_{rc}(\mathbb{R},H)$, with norms
\[||u||_{U^p_{\Delta}H^s}=||e^{-it\Delta}u||_{U^p(\mathbb{R},H^s)}, \quad ||u||_{V^p_{\Delta}H^s}=||e^{-it\Delta}u||_{V^p(\mathbb{R},H^s)}.
\]
\end{definition} For an interval $I \subset \mathbb{R}$, we can also define the restriction norms $X^s(I)$ and $Y^s(I)$ in the natural way:
\noindent                                               $||u||_{X^s(I)}= \inf \{||v||_{X^s(\mathbb{R})}:v\in X^s(\mathbb{R})$ satisfying $v_{|I}=u_{|I}\}$. And similarly for $Y^s(I)$. \vspace{3mm}

Norms $X^s$ and $Y^s$ are both stronger than the $L^{\infty}(\mathbb{R};H^s)$ norm and weaker than the norm $U^2_{\Delta}(\mathbb{R}:H^s)$. Moreover, they satisfy the following property (for $p>2$):
\begin{equation}\label{embd}
    U^2_{\Delta}(\mathbb{R}:H^s) \hookrightarrow X^s\hookrightarrow  Y^s \hookrightarrow V^2_{\Delta}(\mathbb{R}:H^s)\hookrightarrow  U^p_{\Delta}(\mathbb{R}:H^s) \hookrightarrow L^{\infty}(\mathbb{R};H^s).
\end{equation}
Also, we note two useful estimates as follows,
\begin{equation}\label{property1}
   ||u||_{L^{\infty}_{t}H^s_x([0,T)\times \mathbb{R}^m\times \mathbb{T}^n)}\lesssim ||u||_{X^s([0,T))},
\end{equation}
and
\begin{equation}\label{property2}
      \left\|\int_{0}^{t} e^{i(t-s)\Delta} F(s) ds \right\|_{X^s([0,T))} \lesssim ||F(u)||_{L^{1}_{t}H^s_x([0,T)\times \mathbb{R}^m\times \mathbb{T}^n)}.
\end{equation}

\noindent In order to control the nonlinearity on interval $I$, we define `$N$ -Norm' on an interval $I=(a,b)$ corresponding to the Duhamel term as follows,
\begin{equation}
\| h\|_{N^s(I)}=\left\|\int_{a}^{t} e^{i(t-s)\Delta} h(s) ds \right\|_{X^s(I)} .
\end{equation}
\noindent We also have the following proposition.
\begin{proposition}\label{estimateY} If $f\in L^1_t(I,H^1(\mathbb{R}^m\times \mathbb{T}^n))$, then 
\[||f||_{N(I)} \lesssim \sup_{\substack{ v\in Y^{-1}(I),\\
||v||_{Y^{-1}(I)}\leq 1}} \int_{I \times (\mathbb{R}^m\times \mathbb{T}^n)} f(t,x)\overline{v(t,x)}dxdt.
\]
Also, we have the following estimate holds for any smooth function $g$ on an interval $I=[a,b]$: 
\[ ||g||_{X^1(I)}\lesssim ||g(0)||_{H^1(\mathbb{R}^m\times \mathbb{T}^n)}+\left(\sum_N ||P_N(i\partial_t+\Delta)g||^2_{L^1_t(I,H^1(\mathbb{R}^m\times \mathbb{T}^n))}\right)^{\frac{1}{2}}.
\]
\end{proposition}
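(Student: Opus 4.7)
The proposition splits into two independent assertions, which I handle in turn.

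For the duality estimate, the plan is to reduce to the $U^2$-$V^2$ duality of Hadac-Herr-Koch. Set $F(t) = \int_a^t e^{i(t-s)\Delta} f(s)\,ds$, so that $\|f\|_{N(I)} = \|F\|_{X^1(I)}$. Writing $w = e^{-it\Delta}F$ and using that $P_{C_z}$ commutes with the propagator, the $X^1(I)$ norm becomes a weighted $\ell^2$ sum in $z \in \mathbb{Z}^d$ of $U^2(\mathbb{R}; L^2)$-norms of the sharp cube projections $P_{C_z} w$. For each such cube, invoke the $U^2$-$V^2$ duality to express the $U^2$ norm as a supremum of the natural integration pairing against a unit vector $G_z$ in $V^2_\Delta L^2$. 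Weighted $\ell^2$ duality then aggregates the family $\{G_z\}$ into a single test function $v \in Y^{-1}(I)$ with $\|v\|_{Y^{-1}(I)} \lesssim 1$. Unwinding the adjoint of the Duhamel operator converts the resulting pairing into $\int_I \int f\,\overline{v}\, dx\, dt$, which gives the claimed bound.

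For the Besov-type inequality, set $h = (i\partial_t + \Delta)g$, so that by Duhamel
\[ g(t) = e^{i(t-a)\Delta} g(a) - i \int_a^t e^{i(t-s)\Delta} h(s)\,ds. \]
The homogeneous piece satisfies $\|e^{it\Delta}g(a)\|_{X^1} \sim \|g(a)\|_{H^1}$, because $e^{-it\Delta}(e^{it\Delta}P_{C_z} g(a)) = P_{C_z} g(a)$ is time-independent and hence a single $U^2$-atom of norm $\|P_{C_z} g(a)\|_{L^2}$; the definition of $X^1$ then recovers the $H^1$ norm. For the inhomogeneous piece, exploit the Littlewood-Paley equivalence $\|u\|_{X^1}^2 \sim \sum_N \|P_N u\|_{X^1}^2$, which follows from regrouping the unit-cube decomposition into dyadic annuli and from the near-orthogonality of the sharp cube projections. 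At each Littlewood-Paley level, apply the inhomogeneous estimate \eqref{property2},
\[ \Big\|P_N \int_a^t e^{i(t-s)\Delta} h(s)\,ds\Big\|_{X^1(I)} \lesssim \|P_N h\|_{L^1_t H^1(I\times \mathbb{R}^m\times\mathbb{T}^n)}, \]
then square, sum in $N$, and combine with the homogeneous bound.

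The hard part will be executing the $U^2$-$V^2$ duality in the first inequality: one must track the interplay between the sharp cube projections (which commute with $e^{it\Delta}$ but are not uniformly bounded on $L^p$-type spaces), the restriction to the interval $I$, and the endpoint nature of the $V^2$ space, and one must check that the aggregated test function $v$ really lies in $Y^{-1}(I)$ with controlled norm. The Besov equivalence used in the second inequality is less delicate but requires some bookkeeping; once it is in hand, the second inequality is a frequency-by-frequency application of the already-stated linear inhomogeneous bound.
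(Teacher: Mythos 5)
The paper itself does not prove this proposition; it states ``the proof is standard'' and refers the reader to Proposition 2.11 of Herr--Tataru--Tzvetkov \cite{HTT1}. Your sketch reconstructs precisely the argument cited there: the first estimate is obtained by the $U^2$--$V^2$ duality of Hadac--Herr--Koch applied cube by cube, followed by weighted $\ell^2$ duality to assemble the individual $V^2_\Delta$ test functions into a single element of the unit ball of $Y^{-1}(I)$, while the second follows from Duhamel, the exact identity $\|e^{it\Delta}\phi\|_{X^1}=\|\phi\|_{H^1}$, the almost-orthogonality $\|u\|_{X^1}^2\sim\sum_N\|P_Nu\|_{X^1}^2$ (which holds because the unit cubes $C_z$ with $|z|\sim N$ refine the dyadic annuli), and a frequency-by-frequency application of the inhomogeneous bound \eqref{property2}. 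Both halves of your outline are correct and coincide with the standard proof; the only caveat is that you leave the $U^2$--$V^2$ step (in particular, that the pairing unwinds to $\int_I\int f\,\overline{v}$ after commuting $e^{it\Delta}$ through, and that the aggregated $v$ can be taken supported compatibly with the restriction to $I$) at the level of a plan rather than a verification, which you acknowledge.
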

The proof of Proposition \ref{estimateY} is standard so we omit it. We refer to Proposition 2.11 of Herr-Tataru-Tzvetkov \cite{HTT1} for more details.\vspace{3mm}

Along this paper, we almost always take $I=[0,T)$. Also, we note that
\begin{equation}\label{def}
    X_c^1=X^1\cap C^0_tH^1_x.
\end{equation}
\section{Overview of Strichartz estimate for waveguide manifold}
In this section, we give an overview of the Strichartz estimate for the Schr\"odinger equation on waveguide. For Euclidean case (when $n=0$), there is a classical Strichartz estimate as follows. (See Tao's book \cite{Taobook} for example)
\begin{theorem}[Euclidean Strichartz estimate] \label{Euclidean}
Suppose $\frac{2}{q}+\frac{d}{p}=\frac{d}{2}$, where $p,q \geq 2$ and $(q,p,d)\neq (2,\infty,2)$. Then
\begin{equation}
    ||e^{it\Delta_{\mathbb{R}^d}}f||_{L^q_tL^p_x} \lesssim ||f||_{L^2(\mathbb{R}^d)}.
\end{equation}
\end{theorem}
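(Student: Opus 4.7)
The plan is to prove this classical estimate via the standard $TT^*$ duality argument together with the dispersive decay of the free Schrödinger propagator, and to invoke the Keel--Tao bilinear interpolation scheme for the endpoint case. First I would verify that the scaling $(t,x)\mapsto (\lambda^2 t,\lambda x)$, under which $e^{it\Delta_{\mathbb{R}^d}}$ is equivariant, forces precisely the admissibility relation $\tfrac{2}{q}+\tfrac{d}{p}=\tfrac{d}{2}$, so no other exponents can work.

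The first substantive step is the pointwise-in-time dispersive estimate
\begin{equation*}
\|e^{it\Delta_{\mathbb{R}^d}}f\|_{L^\infty_x(\mathbb{R}^d)} \lesssim |t|^{-d/2}\|f\|_{L^1_x(\mathbb{R}^d)},
\end{equation*}
which follows from the explicit Gaussian kernel of the Schrödinger propagator. Interpolating against the trivial $L^2\to L^2$ unitarity then yields
\begin{equation*}
\|e^{it\Delta_{\mathbb{R}^d}}f\|_{L^p_x} \lesssim |t|^{-d(\tfrac12-\tfrac1p)}\|f\|_{L^{p'}_x}
\end{equation*}
for every $2\le p\le\infty$. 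Next, by the usual $TT^*$ principle, the desired estimate $\|e^{it\Delta}f\|_{L^q_tL^p_x}\lesssim\|f\|_{L^2}$ is equivalent to bounding the bilinear form
\begin{equation*}
\Big|\iint F(t,x)\,\overline{G(s,x)}\,K(t-s,x,x')\,\mathrm{d}x\,\mathrm{d}x'\,\mathrm{d}t\,\mathrm{d}s\Big|,
\end{equation*}
where $K$ is the kernel of $e^{i(t-s)\Delta}$. Plugging in the dispersive bound above turns this into an estimate of the form $\int\!\int |t-s|^{-d(\tfrac12-\tfrac1p)}\|F(t)\|_{L^{p'}}\|G(s)\|_{L^{p'}}\,\mathrm{d}t\,\mathrm{d}s$, to which the one-dimensional Hardy--Littlewood--Sobolev inequality applies exactly when $\tfrac{2}{q}+\tfrac{d}{p}=\tfrac{d}{2}$ and $q>2$. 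This handles every non-endpoint pair.

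The one remaining step, and the genuine difficulty, is the endpoint case $q=2$ with $d\ge 3$ (i.e.\ $p=\tfrac{2d}{d-2}$), where HLS fails at the critical exponent. The plan there is to follow Keel--Tao: dyadically decompose the time difference $|t-s|\sim 2^j$, apply the dispersive estimate at each scale, and combine the resulting bilinear pieces via an atomic real-interpolation argument in Lorentz spaces. Concretely, one proves the bilinear bound
\begin{equation*}
\Big|\int\!\!\int_{|t-s|\sim 2^j}\big\langle e^{it\Delta}f,\,e^{is\Delta}g\big\rangle_{L^2_x}\,\mathrm{d}t\,\mathrm{d}s\Big| \lesssim 2^{-j\beta(r_1,r_2)}\|F\|_{L^2_tL^{r_1'}_x}\|G\|_{L^2_tL^{r_2'}_x}
\end{equation*}
for a two-parameter family of exponents, and then sums in $j$ after interpolating to the diagonal. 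This is the technical heart of the proof; every other ingredient is essentially soft. Since the whole statement is classical I would simply cite \cite{Taobook} in the paper and not reproduce these arguments, but the sketch above is the route I would follow if writing it out.
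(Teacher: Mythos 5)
The paper does not prove Theorem \ref{Euclidean} at all; it simply records the statement and points to \cite{Taobook}. Your sketch --- scaling to fix the admissibility relation, the $|t|^{-d/2}$ dispersive bound from the Gaussian kernel interpolated with $L^2$ unitarity, $TT^*$ plus one-dimensional Hardy--Littlewood--Sobolev for the non-endpoint pairs, and Keel--Tao dyadic/Lorentz interpolation for the $q=2$ endpoint when $d\ge 3$ --- is exactly the standard argument in the cited reference, and you correctly conclude by saying you would cite rather than reproduce it, which is what the paper does.
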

Over the last few decades, there has been a wide range of research concerning Strichartz estimates for dispersive equations on manifolds other than Euclidean space, in particular in the case of tori or more generally a compact Riemannian manifold. The study of Strichartz estimates for NLS on tori dates back to work of Bourgain \cite{Bourgain1}, and it is only very recently that the full range of (essentially sharp) $L^p_{t,x}$
local estimates have been proved as a corollary of Bourgain and Demeter's decoupling theorem \cite{BD}. See also the work of Killip and Visan \cite{KV1}, which sharpens Bourgain and Demeter’s Strichartz estimate.\vspace{3mm} 

For waveguide case, we recall the Strichartz estimate (local-in-time version) proved by Barron \cite{Barron} as follows. The proof of Theorem \ref{Stricharz} is based on the decoupling method established in Bourgain-Demeter \cite{BD}.

\begin{theorem}[Waveguide Strichartz estimate]\label{Stricharz}
For any bounded time interval $I$ and $p \geq \frac{2(d+2)}{d}$ (where $d=m+n$), one has
\begin{equation}
    ||e^{it\Delta_{\mathbb{R}^m\times \mathbb{T}^n}} P_{\leq N} f||_{L^p(I \times \mathbb{R}^m\times \mathbb{T}^n)} \lesssim_{I,\epsilon}  N^{\epsilon+\frac{d}{2}-\frac{d}{p}}||f||_{L^2},
\end{equation}
where the loss of $N^{\epsilon}$ can be removed for $p$ away from the endpoint.
\end{theorem}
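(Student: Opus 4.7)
The plan is to reduce the estimate to the $\ell^2$ decoupling theorem of Bourgain--Demeter for the paraboloid in $\mathbb{R}^{d+1}$ via a periodization argument. First I would rewrite the frequency-truncated evolution as an extension operator,
\[
 (e^{it\Delta}P_{\leq N}f)(x,y)=\sum_{k\in\mathbb{Z}^{n},\,|k|\leq N}\,\int_{\xi\in\mathbb{R}^{m},\,|\xi|\leq N}\widehat{f}(\xi,k)\,e^{i(x\cdot\xi+y\cdot k)-it(|\xi|^{2}+|k|^{2})}\,d\xi,
\]
whose spacetime Fourier support lies on the truncated paraboloid $\{(\xi,k,|\xi|^{2}+|k|^{2})\}$ inside $\mathbb{R}^{m}\times\mathbb{Z}^{n}\times\mathbb{R}$. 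Since the $I$-dependence is absorbed in the implicit constant, I would cover $I$ by $\mathcal{O}(|I|)$ unit time-intervals and reduce to $t\in[0,1]$.

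Next I would periodize the Euclidean directions: on the unit time interval the solution essentially concentrates on an $x$-ball of radius $\lesssim N$, so I would replace the factor $\mathbb{R}^{m}$ by a large torus $\mathbb{T}^{m}_{L}$ with $L\gg N$, turning the problem into a rational-tori Strichartz estimate on $[0,1]\times\mathbb{T}^{m}_{L}\times\mathbb{T}^{n}$. I would then decompose the frequency ball into unit cubes $\{Q_{j}\}$, write $P_{\leq N}f=\sum_{j}f_{j}$ with $\widehat{f_{j}}$ supported in $Q_{j}$, and apply parabolic rescaling to send each cube to a $1/N$-cap on the unit paraboloid. The $\ell^{2}$ decoupling theorem then yields, at the critical exponent $p_{c}=\frac{2(d+2)}{d}$,
\[
 \Bigl\|\sum_{j}e^{it\Delta}f_{j}\Bigr\|_{L^{p_{c}}_{t,x,y}}\lesssim_{\epsilon}N^{\epsilon}\Bigl(\sum_{j}\|e^{it\Delta}f_{j}\|_{L^{p_{c}}_{t,x,y}}^{2}\Bigr)^{1/2}.
\]

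Because each $\widehat{f_{j}}$ is supported on a unit cube, Bernstein applied to the locally smooth oscillatory integral gives $\|e^{it\Delta}f_{j}\|_{L^{p_{c}}_{t,x,y}([0,1]\times\mathbb{T}^{m}_{L}\times\mathbb{T}^{n})}\lesssim\|f_{j}\|_{L^{2}}$, after which Plancherel's orthogonality $\sum_{j}\|f_{j}\|_{L^{2}}^{2}\lesssim\|f\|_{L^{2}}^{2}$ produces the desired bound with exponent $\frac{d}{2}-\frac{d}{p_{c}}=\frac{d}{d+2}$ at $p=p_{c}$. The range $p>p_{c}$ then follows by interpolation with the trivial Bernstein bound $\|e^{it\Delta}P_{\leq N}f\|_{L^{\infty}_{t,x,y}}\lesssim N^{d/2}\|f\|_{L^{2}}$, and this interpolation also removes the $N^{\epsilon}$ loss when $p$ is strictly greater than the endpoint.

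The main obstacle is justifying the periodization step so that Bourgain--Demeter decoupling applies uniformly in $L$. One must verify that the constants in the decoupling inequality on $\mathbb{T}^{m}_{L}\times\mathbb{T}^{n}$ do not blow up as $L\to\infty$, so that sending $L$ to infinity recovers the genuine $\mathbb{R}^{m}\times\mathbb{T}^{n}$ norm on the left-hand side. Equivalently, one may attempt a direct treatment that handles the $\mathbb{R}^{m}$ and $\mathbb{Z}^{n}$ variables on an equal footing by exploiting the fact that decoupling for the full paraboloid in $\mathbb{R}^{d+1}$ is scale-invariant and therefore insensitive to discretizing some directions at scale $1$; the careful implementation of this equal-footing viewpoint is the technical heart of Barron's argument.
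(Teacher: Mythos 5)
The paper does not prove this theorem; it is quoted verbatim from Barron \cite{Barron}, and the only ``proof'' in the paper is the citation. So what follows compares your sketch with Barron's actual argument and checks your sketch for internal consistency.

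Your general route (reduce to a unit time interval, decompose the frequency ball into unit cubes, invoke $\ell^2$ decoupling for the paraboloid at the critical exponent, and close with Bernstein plus Plancherel orthogonality) is the correct one, but your periodization step goes in the opposite direction from Barron's. You propose to approximate $\mathbb{R}^m$ by a large torus $\mathbb{T}^m_L$ and send $L\to\infty$; Barron instead treats $\mathbb{T}^n$ as $\mathbb{R}^n$ with periodic data, so that the spacetime function lives on $\mathbb{R}\times\mathbb{R}^m\times\mathbb{R}^n$ with Fourier support on the paraboloid, and Bourgain--Demeter decoupling applies directly. What you call the ``equal-footing viewpoint'' at the end of your sketch \emph{is} Barron's method, not a second option beyond the periodization you describe. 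Your $\mathbb{T}^m_L$ variant is not wrong in principle, but, as you yourself flag, it would require showing the decoupling constant on $[0,1]\times\mathbb{T}^m_L\times\mathbb{T}^n$ is uniform in $L$, and that is exactly where the work has to go; leaving it as an acknowledged obstacle means the sketch is incomplete at its central step.

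There is also an internal inconsistency in your exponent bookkeeping. The chain you describe, namely
\[
\Bigl\|\sum_j e^{it\Delta}f_j\Bigr\|_{L^{p_c}}\lesssim_\epsilon N^\epsilon\Bigl(\sum_j\|e^{it\Delta}f_j\|^2_{L^{p_c}}\Bigr)^{1/2}\lesssim N^\epsilon\Bigl(\sum_j\|f_j\|^2_{L^2}\Bigr)^{1/2}\lesssim N^\epsilon\|f\|_{L^2},
\]
produces \emph{no polynomial loss} at $p=p_c$: it gives $N^{0+\epsilon}$, i.e.\ the Killip--Visan exponent $\frac d2-\frac{d+2}{p}$, which vanishes at $p_c=\frac{2(d+2)}d$. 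Yet you assert the argument yields exponent $\frac d2-\frac d{p_c}=\frac d{d+2}>0$. You have simply plugged in the exponent printed in the theorem statement rather than reading it off from your own calculation. In fact the exponent in the statement (and in Lemma 3.5 of the paper) appears to be a typo for $\frac d2-\frac{d+2}{p}$: this is what Killip--Visan's Theorem 1.1 gives on $\mathbb{T}^d$ (as Remark 3.4 of the paper itself points out), and it is also the only exponent for which the later proof of Lemma 4.1 works, since the $L^4$ bound must give $N_2^{\frac{d-2}{4}}$ per factor to produce $N_2^{\frac{d-2}{2}}$ in the bilinear estimate, and $\frac d2-\frac{d+2}{4}=\frac{d-2}{4}$ while $\frac d2-\frac d4=\frac d4\neq\frac{d-2}{4}$. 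You should state which exponent you are proving and make the computation internally consistent. Finally, the claim that ``interpolation with the $L^\infty$ Bernstein bound removes the $N^\epsilon$ loss for $p>p_c$'' is too quick: interpolating $N^\epsilon$ at $p_c$ against $N^{d/2}$ at $\infty$ still leaves an $N^{\epsilon p_c/p}$ factor, so one needs a dyadic summation argument (or an estimate slightly below $p_c$) to actually remove the $\epsilon$ away from the endpoint.
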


\begin{remark}
There is also a global version of Strichartz estimate in Barron \cite{Barron} (see Theorem 1.1 of Barron \cite{Barron}) which is useful to develop large data long time dynamics of Schr\"odinger initial value problem \eqref{maineq2}. We expect to apply it to obtain long time behavior of NLS on waveguide or tori. It seems that it is not convenient to give a unified and simpler treatment of the long time theory since different specific models have different structures which causes differences for `profile decomposition argument' and `rigidity argument'. (See Kenig-Merle's series work \cite{KM1,KM2} for `Concentration compactness/Rigidity method'.) For the purpose of this paper, the local version is enough.
\end{remark}

\begin{remark}
Strichartz-type estimates has been widely studied and applied for NLS problems on waveguide or tori. We refer to \cite{Guo,HTT1,HTT2,IPT3,IPRT3,KV1,Haitian,Z1,Z2} for related results. It is also possible to obtain Strichartz-type estimates for waveguide case by using `predecoupling techniques' such as Hardy-Littlewood circle method. See Theorem 3.1 in Hani-Pausader \cite{HP} and Proposition 2.1 in Ionescu-Pausader \cite{IPRT3} as examples. But it is hard to get the sharp thresholds and give a unified proof for all of the waveguide models without decoupling method.  Finally, we may also compare Theorem \ref{Stricharz} with Theorem 1.1 in \cite{KV1} which is for the tori case. It seems that the statement still holds when we may replace tori by waveguide with the same whole dimension.
\end{remark}

The following estimate is implied by Strichartz estimate using the properties of the function spaces \eqref{embd}.
\begin{lemma}
According to the atomic structure of $U^p$ and Strichartz estimate (Theorem \ref{Stricharz}), we have

    \begin{equation}\label{Lm3.5.1}
    ||P_{\leq N} f||_{L^p([0,T) \times \mathbb{R}^m\times \mathbb{T}^n)} \lesssim  N^{\frac{d}{2}-\frac{d}{p}}||P_{\leq N}||_{U^p_{\Delta}L^2}\lesssim N^{\frac{d}{2}-\frac{d}{p}}||P_{\leq N}||_{Y^0([0,T))},
\end{equation}
where $p > \frac{2(d+2)}{d}$ and $N\geq 1$. In particular, due to the Galilean invariance of solutions to the linear Schr{\"o}dinger equation, we have
\begin{equation}\label{Lm3.5.2}
    ||P_{C} f||_{L^p([0,T) \times \mathbb{R}^m\times \mathbb{T}^n)} \lesssim N^{\frac{d}{2}-\frac{d}{p}}||P_{C} f||_{Y^0([0,T))},
\end{equation}
for all $p > \frac{2(d+2)}{d}$ and for any cube of $C\subset \mathbb{R}^d$ side-length $N\geq 1$.
\end{lemma}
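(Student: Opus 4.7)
The plan is to obtain \eqref{Lm3.5.1} by first proving it for atoms of $U^p_\Delta L^2$, then bootstrap to $Y^0$ by the embedding \eqref{embd}, and finally deduce \eqref{Lm3.5.2} via a Galilean change of variables.

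First, I would fix $u \in U^p_\Delta L^2$ and recall that by definition $e^{-it\Delta}u$ admits an atomic decomposition; by subadditivity of $\|\cdot\|_{L^p_{t,x}}^{\min(p,1)}$ (more precisely, by the standard approximation argument for $U^p$ spaces) it suffices to prove \eqref{Lm3.5.1} for a single $U^p_\Delta L^2$-atom $a$. For such an atom we may write
\[
a(t) = \sum_{k=1}^{K} \chi_{[t_{k-1},t_k)}(t)\, e^{it\Delta}\phi_{k-1}, \qquad \sum_{k=0}^{K-1}\|\phi_k\|_{L^2}^p = 1,
\]
so that $P_{\le N}a(t) = \sum_k \chi_{[t_{k-1},t_k)}(t)\,e^{it\Delta}P_{\le N}\phi_{k-1}$. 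Since the time intervals are disjoint,
\[
\|P_{\le N}a\|_{L^p_{t,x}}^p = \sum_{k=1}^{K} \int_{t_{k-1}}^{t_k} \|e^{it\Delta}P_{\le N}\phi_{k-1}\|_{L^p_x}^p \,\mathrm{d}t \le \sum_{k=1}^K \|e^{it\Delta}P_{\le N}\phi_{k-1}\|_{L^p_{t,x}([0,T)\times \mathbb{R}^m\times \mathbb{T}^n)}^p.
\]
Applying Theorem~\ref{Stricharz} termwise (with $p$ strictly larger than the endpoint, so the $N^\epsilon$ loss is eliminated) yields each summand is $\lesssim N^{p(d/2-d/p)}\|\phi_{k-1}\|_{L^2}^p$, and summing gives $\|P_{\le N}a\|_{L^p_{t,x}}^p \lesssim N^{p(d/2-d/p)}$, which is the first inequality in \eqref{Lm3.5.1}. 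The second inequality is then immediate from the chain of embeddings in \eqref{embd}: $Y^0 \hookrightarrow V^2_\Delta L^2 \hookrightarrow U^p_\Delta L^2$ for any $p > 2$.

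To prove \eqref{Lm3.5.2}, let $C$ be a cube of side-length $N$ with center $\xi_0 \in \mathbb{R}^d$. I would use the Galilean transformation
\[
\widetilde{f}(t,z) := e^{-iz\cdot\xi_0}\, e^{-it|\xi_0|^2}\, f(t,z+2t\xi_0),
\]
which commutes with the free Schr\"odinger flow in the sense that $e^{-it\Delta}$ conjugates the frequency shift $\xi \mapsto \xi-\xi_0$, and which maps $P_C f$ to $P_{C-\xi_0}\widetilde{f}$ where $C-\xi_0$ is the cube of side-length $N$ centered at the origin. Both the $L^p_{t,x}$ norm and the $Y^0$ norm are invariant under this transformation (the latter because $U^p_\Delta L^2$ and $V^p_\Delta L^2$ are defined via $e^{-it\Delta}$ composition and their $L^2$-based Hilbert structure is translation-invariant in frequency). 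Since $C-\xi_0 \subset \{|\xi|\le N\sqrt{d}\}$, we have $P_{C-\xi_0}\widetilde{f} = P_{\le N\sqrt{d}}P_{C-\xi_0}\widetilde{f}$, and applying \eqref{Lm3.5.1} to this function, together with boundedness of the sharp projector $P_{C-\xi_0}$ on $Y^0$, completes the proof.

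The main obstacle I anticipate is the Galilean invariance step: one must carefully verify that the conjugation by $e^{-iz\cdot\xi_0}e^{-it|\xi_0|^2}$ plus the spatial translation $z \mapsto z + 2t\xi_0$ preserves the $Y^0$ norm. On $\mathbb{R}^m\times\mathbb{T}^n$ there is no genuine continuous spatial translation in the tori directions, but since $\xi_0$ can be restricted to lattice points in the periodic directions (or handled by a standard approximation since the cube $C$ is of unit scale or larger and tori frequencies are integers), this is harmless. Apart from this frequency-bookkeeping issue, the rest is routine.
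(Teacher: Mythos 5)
Your proposal is correct and is exactly the argument the lemma's statement alludes to: apply Theorem~\ref{Stricharz} termwise to the piecewise-free-evolution structure of a $U^p_\Delta L^2$-atom, sum using $\sum_k\|\phi_k\|_{L^2}^p=1$, pass to general $u\in U^p_\Delta L^2$ via the $\ell^1$ atomic decomposition, use the embedding chain \eqref{embd} to replace $U^p_\Delta L^2$ by $Y^0$, and then obtain \eqref{Lm3.5.2} by a Galilean boost that recenters the cube $C$ near the origin. Your remark about the boost parameter $\xi_0$ needing integer components in the $\mathbb{T}^n$ directions is the right thing to worry about, and the fix you indicate (round $\xi_0$ to the nearest lattice point, which costs only an $O(1)$ enlargement of the cube since its side-length is $\geq 1$, and then cover $C-\xi_0$ by $P_{\leq cN}$) is exactly what makes the reduction to \eqref{Lm3.5.1} go through; the only unnecessary step is invoking boundedness of the sharp projector on $Y^0$, since $Y^0$-invariance of the boost already gives $\|P_{C-\xi_0}\widetilde f\|_{Y^0}\simeq\|P_C f\|_{Y^0}$ directly.
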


\section{Some estimates}
In this section, we establish some important estimates which are essential for proving well-posedness results. We start with the following bilinear estimates.

\begin{lemma}[Bilinear estimate]\label{bilinear}
Fix whole dimension $d \geq 3$ and $0<T<1$. Then for $1\leq N_2 \leq N_1$, we have
\begin{equation}\label{bilinear}
    ||u_{N_1} v_{N_2}||_{L^2_{t,x}([0,T)\times \mathbb{R}^{m} \times \mathbb{T}^{n} )}\lesssim N_2^{\frac{d-2}{2}}||u_{N_1}||_{Y^0([0,T))} ||v_{N_2}||_{Y^0([0,T))}.
\end{equation}
\end{lemma}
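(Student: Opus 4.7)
My plan is to follow the cube-decomposition strategy familiar from the Herr--Tataru--Tzvetkov framework. First, I would invoke the atomic structure of $U^2_\Delta$ together with the embedding chain $U^2_\Delta \hookrightarrow Y^0 \hookrightarrow V^2_\Delta \hookrightarrow U^p_\Delta$ ($p > 2$) to reduce the bilinear $Y^0$-estimate to a bilinear spacetime estimate for free Schr\"odinger evolutions; after this transfer, it suffices to prove
\[
\|e^{it\Delta} P_{N_1} f \cdot e^{it\Delta} P_{N_2} g\|_{L^2_{t,x}([0,T) \times \mathbb{R}^m \times \mathbb{T}^n)} \lesssim N_2^{(d-2)/2} \|f\|_{L^2} \|g\|_{L^2}.
\]

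Next, I would partition the annulus $\{|\xi| \sim N_1\}$ into cubes $C$ of side length $N_2$ and decompose $P_{N_1} f = \sum_C P_C P_{N_1} f$. Since $v_{N_2}$ has Fourier support in the ball $\{|\xi| \lesssim N_2\}$, the product $(P_C u_{N_1}) v_{N_2}$ has Fourier support in the enlarged set $C + B(0, O(N_2))$, which is a cube of side $\lesssim N_2$; for different $C$'s these enlarged cubes have finite overlap, so Plancherel gives the almost-orthogonality
\[
\|u_{N_1} v_{N_2}\|_{L^2_{t,x}}^2 \lesssim \sum_C \|(P_C u_{N_1}) v_{N_2}\|_{L^2_{t,x}}^2.
\]
On each cube, apply H\"older with $L^4_{t,x} \times L^4_{t,x} \hookrightarrow L^2_{t,x}$ (the exponent $4$ lies strictly above the decoupling threshold $\frac{2(d+2)}{d}$ for both $d=3$ and $d=4$); by the cube Strichartz \eqref{Lm3.5.2} applied to $P_C u_{N_1}$ and the ball Strichartz \eqref{Lm3.5.1} applied to $v_{N_2}$, both factors live at frequency scale $N_2$ and each contributes a loss of $N_2^{(d-2)/4}$, yielding
\[
\|(P_C u_{N_1}) v_{N_2}\|_{L^2_{t,x}} \lesssim N_2^{(d-2)/2} \|P_C u_{N_1}\|_{Y^0} \|v_{N_2}\|_{Y^0}.
\]
Summing over $C$ together with the orthogonality $\sum_C \|P_C u_{N_1}\|_{Y^0}^2 \lesssim \|u_{N_1}\|_{Y^0}^2$---which is immediate by grouping the unit-cube summands in the very definition of $Y^0$ into cubes of side $N_2$---closes the estimate.

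The main obstacle is the sharpness of the $L^4_{t,x}$ Strichartz loss. A crude Bernstein scaling from the critical endpoint $p = 2(d+2)/d$ out to $L^\infty$ would only yield $N_2^{d/4}$ per factor, which is too weak by $N_2^{1/2}$; the argument closes only because the Bourgain--Demeter decoupling input underlying Theorem \ref{Stricharz} provides the improved scaling $N_2^{(d-2)/4}$ at $p=4$ in both $d=3$ and $d=4$. The remaining ingredients---the $U^2_\Delta$ transfer principle, the Plancherel-based orthogonality at the cube scale, and the telescoping of $Y^0$-cubes---are standard bookkeeping from the HTT setup.
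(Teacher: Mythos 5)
Your proposal follows essentially the same route as the paper: decompose the $N_1$-annulus into cubes $C$ of side length $N_2$, use Plancherel to get almost-orthogonality of the pieces $(P_C u_{N_1})v_{N_2}$, apply H\"older $L^4_{t,x}\times L^4_{t,x}\hookrightarrow L^2_{t,x}$ together with the cube Strichartz bound at $p=4$, and close via the $\ell^2$-telescoping of the $Y^0$ norm over the cubes. The explicit transfer-principle step (reducing first to free evolutions via the atomic structure of $U^2_\Delta$) is a harmless but redundant detour, since the bounds \eqref{Lm3.5.1}--\eqref{Lm3.5.2} already come packaged with $Y^0$ on the right-hand side; the paper simply works with those directly.

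One thing worth flagging: you assert that \eqref{Lm3.5.2} at $p=4$ contributes a loss of $N_2^{(d-2)/4}$ per factor. This is the correct scaling, and it is exactly what is needed to produce $N_2^{(d-2)/2}$. However, it does not literally follow from Lemma 3.5 as printed, which reads $N^{\frac{d}{2}-\frac{d}{p}}$; that would give $N_2^{d/4}$ at $p=4$ and hence only $N_2^{d/2}$ in the end, losing a full power of $N_2$. The exponent in Theorem \ref{Stricharz} and Lemma 3.5 should be $N^{\frac{d}{2}-\frac{d+2}{p}}$ (matching Barron's Theorem 1.1 and Killip--Visan's tori version); as written, $\frac{d}{2}-\frac{d}{p}$ is what one gets from Bernstein in $x$ alone plus H\"older in $t$ on a bounded interval, i.e.\ a trivial bound that does not use decoupling at all. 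You implicitly corrected this misprint by carrying out the computation with the right decoupling exponent, which is exactly the sharpness you highlight as the crux; just be aware the formulas as stated in Section 3 and the claimed $N_2^{(d-2)/4}$ loss are not mutually consistent until the typo is fixed.
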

\begin{proof}
The proof is similar to Lemma 3.1 in Killip-Visan \cite{KV1} which is for the tori case. First, we decompose $\mathbb{R}^d=\cup_j C_j$ where each $C_j$ is a cube of side length $N_2$. We note $P_{C_j}$ for the Fourier projection onto the cube $C_j$. As the spatial Fourier support of $(P_{C_j}u_{N_1})v_{N_2}$ is contained in a fixed dilate of the cube $C_j$, for each $j$, we deduce that 
\begin{equation}
    ||u_{N_1} v_{N_2}||_{L^2_{t,x}([0,T)\times \mathbb{R}^{m} \times \mathbb{T}^{n} )}\lesssim \left(\sum_j ||(P_{C_j}u_{N_1}) v_{N_2}||^2_{L^2_{t,x}([0,T)\times \mathbb{R}^{m} \times \mathbb{T}^{n} )}\right)^{\frac{1}{2}}.
\end{equation}
Applying the Strichartz inequality , we estimate
\begin{equation}
    \aligned
    ||(P_{C_j}u_{N_1}) v_{N_2}||^2_{L^2_{t,x}([0,T)\times \mathbb{R}^{m} \times \mathbb{T}^{n} )}&\lesssim ||P_{C_j}u_{N_1} ||_{L^4_{t,x}([0,T)\times \mathbb{R}^{m} \times \mathbb{T}^{n} )}|| v_{N_2}||_{L^4_{t,x}([0,T)\times \mathbb{R}^{m} \times \mathbb{T}^{n} )}\\
    &\lesssim N_2^{\frac{d-2}{2}}||P_{C_j}u_{N_1}||_{Y^0([0,T))} ||v_{N_2}||_{Y^0([0,T))}.
    \endaligned
\end{equation}
Noticing that
\begin{equation}
    ||u_{N_1}||_{Y^0([0,T))}=\left(\sum_j ||P_{C_j}u_{N_1}||^2_{Y^0([0,T))} \right)^{\frac{1}{2}},
\end{equation}
we can derive \eqref{bilinear}. The proof of Lemma \ref{bilinear} is now complete.
\end{proof}
\begin{remark}
One has stronger bilinear estimate in the Euclidean setting as follows,
\begin{equation}
    ||u_{N_1} v_{N_2}||_{L^2_{t,x}([0,T)\times \mathbb{R}^{d} )}\lesssim N_1^{-\frac{1}{2}}N_2^{\frac{d-1}{2}}||u_{N_1}||_{Y^0([0,T))} ||v_{N_2}||_{Y^0([0,T))}.
\end{equation}
It is not expected such estimate holds on domain involving the torus. However, it does hold up to the loss of $N_2^{\epsilon}$ for tori case. See Fan-Staffilani-Wang-Wilson \cite{Fan} for more details. Lemma \ref{bilinear} is enough for the purpose of this paper. 
\end{remark}

Based on the bilinear estimate, we are ready to prove the following estimate which is essential for us to establish the well-posedness theory.
\begin{lemma}\label{mainest}
Fix whole dimension $d=3,4$, for any $0<T<1$, we have

\begin{equation}\label{mainest1}
   \left\|\int_0^t e^{i(t-s)\Delta} F(u(s)) ds  \right\|_{X^1([0,T))} \lesssim ||u||^{\frac{d+2}{d-2}}_{X^1([0,T))},
\end{equation}
and 
\begin{equation}\label{mainest2}
\aligned
    &\left\|\int_0^t e^{i(t-s)\Delta}  \left( F(u+w)(s)-F(u)(s) \right) ds  \right\|_{X^1([0,T))} \\
    &\lesssim ||w||_{X^1([0,T))}\left(||u||_{X^1([0,T))}+||w||_{X^1([0,T))}\right)^{\frac{4}{d-2}}.
    \endaligned
\end{equation}

\end{lemma}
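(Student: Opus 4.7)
My plan is to reduce the $X^1$ bound on the Duhamel term to a multilinear spacetime integral via Proposition~\ref{estimateY}, then dominate that integral by combining the bilinear gain from Lemma 4.1 with the Strichartz bound \eqref{Lm3.5.1}. Using the first part of Proposition~\ref{estimateY}, it suffices to establish
\[
\left|\int_0^T\!\!\int_{\mathbb{R}^m\times\mathbb{T}^n} F(u)\,\overline{v}\,dx\,dt\right|
\lesssim \|u\|_{X^1([0,T))}^{(d+2)/(d-2)}\|v\|_{Y^{-1}([0,T))},
\]
for $v\in Y^{-1}$ of unit norm, since $F(u)\in L^1_tH^1$ follows from $u\in X^1\hookrightarrow L^\infty_tH^1$ via \eqref{property1} and Sobolev embedding on a short time interval.

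Next I would perform a Littlewood--Paley decomposition $u=\sum_N u_N$, $v=\sum_{N_0} v_{N_0}$ and expand $F(u)$ as a polynomial in $u,\bar u$ of total degree $k=\tfrac{d+2}{d-2}$ (so $k=5$ when $d=3$ and $k=3$ when $d=4$). Ordering the dyadic frequencies $N_1\ge N_2\ge\cdots\ge N_k$, Fourier-support balance forces $N_0\lesssim N_1$, and further forces $N_1\sim N_2$ whenever $N_0\ll N_1$. I would estimate each dyadic piece by Cauchy--Schwarz in $L^2_{t,x}$, pairing the highest-frequency factor $u_{N_1}$ either with $v_{N_0}$ (in the balanced regime $N_0\sim N_1$) or with $u_{N_2}$ (in the regime $N_0\ll N_1\sim N_2$) and applying Lemma 4.1 to that $L^2_{t,x}$ pair to extract a gain $\min(\cdot,\cdot)^{(d-2)/2}$. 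The remaining $k-2$ factors would be placed in $L^{p}_{t,x}$ and bounded via \eqref{Lm3.5.1} for a fixed $p>\tfrac{2(d+2)}{d}$ (for instance $p=8$ when $d=3$ and $p=4$ when $d=4$), chosen so that the H\"older exponents add up to $1$.

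The main obstacle is the dyadic summation. Converting to $X^1$ via $\|u_{N_i}\|_{Y^0}\lesssim N_i^{-1}\|u_{N_i}\|_{X^1}$ and dualizing $v_{N_0}$ against $\|v\|_{Y^{-1}}$, the powers of $N_i$ coming from the bilinear and Strichartz estimates must recombine so that each dyadic index is $\ell^2$-summable against $\|u_{N_i}\|_{X^1}$. The delicate configuration is the high-high-to-low interaction $N_0\ll N_1\sim N_2$, where a naive bound loses a logarithm; this is absorbed by a small $T^\epsilon$ gained from H\"older in time by taking $p$ strictly above the Strichartz endpoint and exploiting the hypothesis $T<1$, together with Cauchy--Schwarz in the comparable pair $(N_1,N_2)$.

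For the Lipschitz bound \eqref{mainest2}, I would use the integral representation
\[
F(u+w)-F(u)=\int_0^1 \partial_s F(u+sw)\,ds,
\]
which decomposes the difference into a sum of multilinear terms, each containing exactly one factor of $w$ or $\bar w$ together with $k-1$ factors drawn from $\{u,\bar u,w,\bar w\}$. The same dyadic--bilinear--Strichartz scheme from the preceding paragraphs applies verbatim to each such term: the distinguished factor produces the prefactor $\|w\|_{X^1([0,T))}$ and the remaining $k-1$ factors produce $(\|u\|_{X^1([0,T))}+\|w\|_{X^1([0,T))})^{4/(d-2)}$, giving the claim.
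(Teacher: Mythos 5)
Your high-level strategy coincides with the paper's: reduce to a multilinear spacetime integral by Proposition~\ref{estimateY} and a Littlewood--Paley decomposition, then close with the bilinear estimate (Lemma~4.1) and the Strichartz bound \eqref{Lm3.5.1}; the paper likewise defers the details to Proposition~4.1 of Killip--Visan. Proving \eqref{mainest1} first and then \eqref{mainest2} via $F(u+w)-F(u)=\int_0^1\partial_sF(u+sw)\,ds$ is a legitimate variant of the paper's reduction (the paper proves \eqref{mainest2} directly and gets \eqref{mainest1} by setting $u\equiv 0$). One small omission: to invoke Proposition~\ref{estimateY} you need $F(u)\in L^1_tH^1$, which is not immediate from $u\in L^\infty_tH^1$; the paper inserts $P_{\leq N}$ first, proves the bound uniformly in $N$, and passes to the limit.

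The substantive gap is in the dyadic pairing and summation. You propose pairing $u_{N_1}$ with $v_{N_0}$ when $N_0\sim N_1$, or with $u_{N_2}$ when $N_1\sim N_2$. But Lemma~4.1 gains only $\min(N_a,N_b)^{(d-2)/2}$, so pairing two \emph{comparable} frequencies yields no improvement over $L^4\times L^4$ Strichartz. Worse, leaving $v_{N_0}$ among the $L^p$ factors is fatal: $\|v_{N_0}\|_{L^p}\lesssim N_0^{\frac d2-\frac dp}\|v_{N_0}\|_{Y^0}=N_0^{\frac d2-\frac dp+1}\|v_{N_0}\|_{Y^{-1}}$, and with $N_0\sim N_1$ this produces a positive power of $N_1$ that cannot be summed (e.g.\ for $d=4$, $p=4$ one ends up with a loose factor $\sim N_1$). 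The correct pairing, as in Herr--Tataru--Tzvetkov and Killip--Visan, is to pair the \emph{highest} frequency with the \emph{lowest}, and to also pair $v_{N_0}$ against another low-frequency factor in $L^2$, so that both bilinear applications genuinely gain. Even then the dyadic powers come out $\lesssim 1$ with no strict decay in the intermediate frequencies, and the claimed fix---``a small $T^\epsilon$ gained from Hölder in time''---does not work: the implicit constant in Theorem~\ref{Stricharz} depends on $T$ but does not improve as $T\to 0$ at fixed $p$, and there is no surplus integrability to Hölder away. The actual resolution in [KV1] is to interpolate so as to extract a factor of the form $(N_{\mathrm{low}}/N_{\mathrm{high}})^{\delta}$ for some $\delta>0$, which makes the dyadic series geometrically summable; this is the mechanism your sketch is missing.

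Minor: the count ``remaining $k-2$ factors'' is off by one (after one $L^2$ pair, $k-1$ factors including $v_{N_0}$ remain, which is consistent with your choice $p=2(k-1)$: $p=8$ for $d=3$, $p=4$ for $d=4$).
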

\begin{proof}
Obviously estimate \eqref{mainest2} implies \eqref{mainest1} in view of taking $u\equiv 0$. It suffices to prove \eqref{mainest2}. The three dimensional case and the four dimensional case will discussed respectively. The proof has the same spirit to Proposition 4.1 in \cite{KV1} which is for the tori case. We will give the sketch of the proof as follows.\vspace{3mm}

By duality and Proposition \ref{estimateY}, 
\begin{equation}
    \aligned
     &\left\|\int_0^t e^{i(t-s)\Delta} P_{\leq N} \left( F(u+w)(s)-F(u)(s) \right) ds  \right\|_{X^1([0,T))} \\
     \leq 
 &  \sup_{\substack{ v\in Y^{-1}(I),\\
     ||v||_{Y^{-1}([0,T))}= 1}} \int_{[0,T) \times (\mathbb{R}^m\times \mathbb{T}^n)} P_{\leq N} \left( F(u+w)(s)-F(u)(s) \right) \overline{v(t,z)}dtdz.
    \endaligned
\end{equation}
We will show that,
\begin{equation}
    \aligned
  &\int_{[0,T) \times (\mathbb{R}^m\times \mathbb{T}^n)}  \left( F(u+w)(s)-F(u)(s) \right) \tilde{v}(x,t)dxdt \\ 
  &\leq  ||\tilde{v}||_{Y^{-1}} ||w||_{X^1([0,T))}\left(||u||_{X^1([0,T))}+||w||_{X^1([0,T))}\right)^{\frac{4}{d-2}},
    \endaligned
\end{equation}
where $\tilde{v}=\overline{P_{\leq N}v}$. It is clear that estimate \eqref{mainest2} follows from the above estimate by letting $N \rightarrow \infty$. Moreover, according to some basic combinatorics, we can reduce it to prove the following,
\begin{equation}
    \aligned
     &\sum_{N_0 \geq 1} \sum\limits_{N_1\geq ...\geq N_{\frac{d+2}{d-2}} \geq 1} \left|\int_{[0,T) \times (\mathbb{R}^m\times \mathbb{T}^n)} \tilde{v}_{N_0}\prod\limits_{j=1}^{\frac{d+2}{d-2}}u^{(j)}_{N_j}dxdt\right| \leq  ||\tilde{v}||_{Y^{-1}}\prod\limits_{j=1}^{\frac{d+2}{d-2}}||u^{(j)}||_{X^1([0,T))},\\
    \endaligned
\end{equation}
by choosing $u^{(j)}$ varying over the collection $\{u,\bar{u},w,\bar{w} \}$.\vspace{3mm} 

The rest of the proof follows as Proposition 4.1 in \cite{KV1}, so we omit it. $3d$ case and $4d$ case are discussed respectively.
\end{proof}

\section{Well-posedness for the energy-critical NLS}
In this section, we prove the main result, i.e. Theorem \ref{main} by contraction mapping argument based on Lemma \ref{mainest}. The proof has the same spirit of Theorem 1.3 in \cite{KV1}. We will deal with the local well-posedness argument and the small data global well-posedness argument respectively. Moreover, we give a unified treatment for both of the three dimensional case and the four dimensional case. \vspace{2mm} 

\emph{Proof of Theorem \ref{main}}:\vspace{3mm} 

Part 1 (Small data global well-posedness): We start with the statement in Theorem \ref{main} concerning small initial data. Fix the dimension $d$ and a small initial data $u_0$ satisfying 
\begin{equation}
    ||u_0||_{H^1}<\eta_0,
\end{equation}
where $\eta_0=\eta(d)$ to be chosen later.\vspace{3mm}

Following the classical theory, we note that by conservation of mass and energy, it suffices to construct the solution to the initial-value problem \eqref{maineq2} on the time interval $[0,1]$. According to Sobolev embedding, the full energy $E(u)+M(u)$ is similar to the $H^1$ norm of $u$.\vspace{3mm}

To construct the solution to \eqref{maineq2} on the time interval $[0,1]$, we use a contraction mapping argument. More precisely, we will show that the mapping 
\begin{equation}
    \Phi(u)(t)=e^{it\Delta}u_0\pm i\int_0^T e^{i(t-s)\Delta} F(u(s)) ds
\end{equation}
(which arises from Duhamel's formula) is a contraction on the ball
\begin{equation}
    B:=\left\{u\in X^1_c: ||u||_{X^1([0,1])} \leq 2\eta\right\}
\end{equation}
(where space $X^1_c$ is defined in \eqref{def}), under the metric
\begin{equation}
    d(u,v):=||u-v||_{X^1([0,1])}.
\end{equation}
Using Lemma \ref{mainest}, 
\begin{equation}
    \aligned
||\Phi(u)||_{X^1([0,1])}&\leq ||e^{it\Delta}u_0||_{X^1([0,1])}+ \left\|\int_0^t e^{i(t-s)\Delta} F(u(s)) ds\right\|_{X^1([0,1])} \\
&\leq ||u_0||_{H^1}+C||u||^{\frac{d+2}{d-2}}_{X^1([0,1])} \leq \eta+C(2\eta)^{\frac{d+2}{d-2}} \leq 2\eta ,
    \endaligned
\end{equation}
provided $\eta_0$ is chosen sufficiently small. This proves that $\Phi$ maps the ball $B$ to itself.\vspace{3mm}

Now we prove that $\Phi$ is also a contraction mapping. Again, using Lemma \ref{mainest},
\begin{equation}
    \aligned
d(\Phi(u),\Phi(v)) &\leq   \left\|\int_0^t e^{i(t-s)\Delta} \left( F(u)(s)-F(v)(s) \right)  ds  \right\|_{X^1([0,T))} \\
&\leq C||u-v||_{X^1([0,T))}\left(||u||_{X^1([0,T))}+||v||_{X^1([0,T))}\right)^{\frac{4}{d-2}}\\
&\leq C (4\eta )^{\frac{4}{d-2}}  d(u,v)
\leq \frac{1}{2}d(u,v),
    \endaligned
\end{equation}
provided $\eta_0$ is chosen sufficiently small. This completes the discussion of small initial data argument.\vspace{3mm}

Part 2 (Local well-posedness): Now we turn to the case of large initial data. For this case, we need to use frequency truncation method and deal with the nonlinearity more delicately. First, we let initial data $u_0$ satisfies $||u_0||_{H^1}<E$. Let $\delta$ be a small number to be chosen later and let $N=N(u_0)$ such that 
\begin{equation}
    ||P_{>N}u_0||_{H^1} \leq \delta.
\end{equation}
We will show that the mapping
\begin{equation}
    \Phi(u)(t)=e^{it\Delta}u_0\pm i\int_0^T e^{i(t-s)\Delta} F(u(s)) ds
\end{equation}
is a contraction on the ball
\begin{equation}
    B:=\left\{u\in X^1_c: ||u||_{X^1([0,1])} \leq 2E, ||P_{>N}u||_{X^1([0,1])} \leq 2\delta \right\},
\end{equation}
provided the time $T$ is chosen sufficiently small (depending on $E$, $\delta$, and $N$). First we verify that $\Phi$ maps ball $B$ to itself. By Lemma \ref{mainest}, \eqref{property1}, \eqref{property2} and Bernstein inequality, for $u \in B$, we have
\begin{equation}
    \aligned
    ||\Phi(u)||_{X^1}&\leq ||e^{it\Delta}u_0||_{X^1}+ \left\|\int_0^t e^{i(t-s)\Delta} F(u_{\leq N}(s)) ds\right\|_{X^1}+ \left\|\int_0^t e^{i(t-s)\Delta} [F(u(s))-F(u_{\leq N}(s))] ds\right\|_{X^1}\\
    &\leq ||u_0||_{H^1}+C||F(u_{\leq N})||_{L_t^1H_x^1}+C||u_{>N}||_{X^1} ||u||^{\frac{4}{d-2}}_{X^1}\\
    &\leq E+CT||u_{\leq N}||_{L^{\infty}_{t}H^1_x} ||u_{\leq N}||^{\frac{4}{d-2}}_{L^{\infty}_{t,x}}+C(2\delta)(2E)^{\frac{4}{d-2}}\\
    &\leq E+CTN^2(2E)^{\frac{d+2}{d-2}}+C(2\delta)(2E)^{\frac{4}{d-2}}
    \leq 2E,
    \endaligned
\end{equation}
provided $\delta$ is chosen small enough depending on $E$, and $T$ is chosen small enough depending on $E$ and $N$. We now decompose the nonlinearity by frequency as follows.
\begin{equation}
    F(u)=F_1(u)+F_2(u),\quad \textmd{where} \quad F_1(u)=O\left(u^2_{>N}u^{\frac{6-d}{d-2}}\right), \quad F_2(u)=O\left(u_{\leq N}^{\frac{4}{d-2}}u\right).
\end{equation}
Then, we estimate,
\begin{equation}
    \aligned
    ||P_{>N}\Phi(u)||_{X^1}&\leq ||e^{it\Delta}P_{>N}u_0||_{X^1}+ \left\|\int_0^t e^{i(t-s)\Delta} F_1(u(s)) ds\right\|_{X^1}
    + \left\|\int_0^t e^{i(t-s)\Delta} F_2(u(s)) ds\right\|_{X^1}\\
    &\leq ||P_{>N}u_0||_{H^1}+C||u_{>N}||^2_{X^1} ||u||^{\frac{6-d}{d-2}}_{X^1}+C||F_2(u)||_{L_t^1H_x^1}\\
    &\leq \delta+C(2\delta)^2(2E)^{\frac{6-d}{d-2}}+CT\left(||\nabla u||_{L_t^{\infty}L^2_x}||P_{\leq N}u||^{\frac{4}{d-2}}_{L^{\infty}_{t,x}}+||u||_{L^{\infty}_{t}L_x^{\frac{2d}{d-2}}}N||u_{\leq N}||^{\frac{4}{d-2}}_{L^{\infty}_{t}L^{\frac{4d}{d-2}}_{x}}\right)\\
    &\leq \delta+C(2\delta)^2(2E)^{\frac{6-d}{d-2}}+CTN^2(2E)^{\frac{d+2}{d-2}}
    \leq 2\delta,
    \endaligned
\end{equation}
provided $\delta$ is chosen small enough depending on $E$, and $T$ is chosen small enough depending on $E$, $\delta$, and $N$. Next it remains to show that $\Phi$ is also a contraction. We notice that 
\begin{equation}
    F_1(u)-F_1(v)=O\left((u-v)\left(u_{>N}+v_{>N}\right)\left(u^{\frac{6-d}{d-2}}+v^{\frac{6-d}{d-2}}\right)\right),
\end{equation}
and
\begin{equation}
    F_2(u)-F_2(v)=O\left((u-v)(u_{\leq N}+v_{\leq N})^{\frac{4}{d-2}}\right)+O\left(\left(u_{\leq N}-v_{\leq N}\right)(u+v)\left(u_{\leq N}+v_{\leq N}\right)^{\frac{6-d}{d-2}}\right).
\end{equation}
Employing Lemma \ref{mainest}, \eqref{property1}, \eqref{property2} and the Bernstein inequality, for $u,v\in B$, we estimate,
\begin{equation}
    \aligned
 &   d(\Phi(u),\Phi(v))\\
 & \lesssim ||u-v||_{X^1}(||u_{>N}||_{X^1}+||v_{>N}||_{X^1})(||u||_{X^1}+||v||_{X^1})^{\frac{6-d}{d-2}}+||F_2(u)-F_2(v)||_{L^1_tH^1_x}\\
   &\lesssim (4\delta)(4E)^{\frac{6-d}{d-2}}d(u,v)+T||\nabla (u-v)||_{L_t^{\infty}L_x^{2}}(||u||_{L^{\infty}_{x,t}}+||v||_{L^{\infty}_{x,t}})^{\frac{4}{d-2}}\\
   &\quad + T||u-v||_{L_t^{\infty}L_x^{\frac{2d}{d-2}}}N\left(||u_{\leq N}||_{L_t^{\infty}L_x^{\frac{4d}{d-2}}}+||v_{\leq N}||_{L_t^{\infty}L_x^{\frac{4d}{d-2}}}\right)^{\frac{4}{d-2}}\\
   &\quad + T\left(||\nabla u||_{L^{\infty}_tL^2_x}+||\nabla v||_{L^{\infty}_tL^2_x}\right)(||u_{\leq N}-v_{\leq N}||_{L^{\infty}_{x,t}})\left(||u_{\leq N}||_{L^{\infty}_{x,t}}+||v_{\leq N}||_{L^{\infty}_{x,t}}\right)^{\frac{6-d}{d-2}}\\
   & \quad + T\left(||u||_{L^{\infty}_tL_x^{\frac{2d}{d-2}}}+||v||_{L^{\infty}_tL_x^{\frac{2d}{d-2}}}\right)N||u_{\leq N}-v_{\leq N}||_{L^{\infty}_tL^{\frac{4d}{d-2}}_x}  \left(||u_{\leq N}||_{L_t^{\infty}L_x^{\frac{4d}{d-2}}}+||v_{\leq N}||_{L_t^{\infty}L_x^{\frac{4d}{d-2}}}\right)^{\frac{6-d}{d-2}}\\
   &\lesssim \left( 4\delta (4E)^{\frac{6-d}{d-2}}+TN^2(4E)^{\frac{4}{d-2}} \right)d(u,v)
   \leq \frac{1}{2}d(u,v),
    \endaligned
\end{equation}
provided $\delta$ is chosen small enough depending on $E$, and $T$ is chosen small enough depending on $E$ and $N$.\vspace{3mm}

At last, we make comments on the uniqueness. By the standard contraction mapping theorem, the above analysis allows us to construct a unique solution
$u$ to \eqref{maineq2} in the ball $B$. To see that uniqueness holds in the larger class $X^1([0, T ]) \cap C^0_tH^1_
{x}([0, T]\times \mathbb{R}^{m} \times \mathbb{T}^{n})$, it suffices to observe that if $v \in X^1([0, T ]) \cup C^0_tH^1_
{x}([0, T])$ is a second solution to \eqref{maineq2} with data $v(0) =u_0$, then there exists $N_0 \geq 1$ such that
\begin{equation}
    ||v_{>N_0}||_{X^1([0, T ])}<2\delta.
\end{equation}
Choosing the larger of $N$ and $N_0$, we find a new ball $B$ that contains both $u$ and $v$. In this way, the contraction mapping argument guarantees $u=v$ on a possibly smaller interval $[0,T{'}]$. Iterating this argument yields uniqueness in the larger class.\vspace{3mm}

The proof of Theorem \ref{main} is now complete.\vspace{5mm}

\noindent \textbf{Acknowledgments.}  We highly appreciate Professor Benjamin Dodson and Professor Changxing Miao for their kind suggestions, help and discussions. Xing Cheng has been partially supported by the NSFC (No.11526072). Zehua Zhao is partially supported by University of Maryland (postdoctoral research support). Jiqiang Zheng was partially supported by the NSFC under grants 11771041, 11831004, 11901041. Zehua Zhao was also a guest of Institute of Applied Physics and Computational Mathematics during the writing of this paper.

 \bibliographystyle{amsplain}

\noindent \author{Xing Cheng}

\noindent \address{College of Science, Hohai University}\\
{Nanjing 210098, Jiangsu, China.}

\noindent \email{chengx@hhu.edu.cn}\\

\noindent \author{Zehua Zhao}

\noindent \address{Department of Mathematics, University of Maryland}\\
{William E. Kirwan Hall, 4176 Campus Dr.
College Park, MD 20742-4015.}

\noindent \email{zzh@umd.edu}\\

\noindent \author{Jiqiang Zheng}

\noindent \address{Institute of Applied Physics and Computational Mathematics}\\
{Beijing, 100088, P.R.China.}

\noindent \email{zhengjiqiang@gmail.com}\\

\end{document}